\date{\today}
\newtheorem{theorem}{Theorem}[section]
\newtheorem{lemma}[theorem]{Lemma}
\newtheorem{defi}[theorem]{Definition}
\theoremstyle{definition}
\newtheorem{remark}[theorem]{Remark}
\def\be{\begin{equation}}
\def\ee{\end{equation}}
\renewcommand{\includegraphics}[2][]{}
\begin{document}

\title[Path connectedness and entropy density]{Path connectedness and entropy density of the space of hyperbolic ergodic measures}

\author[A.\ Gorodetski]{Anton Gorodetski}

\address{Department of Mathematics, University of California, Irvine, CA~92697, USA}

\email{asgor@math.uci.edu}

\thanks{A.\ G.\ was supported in part by NSF grant DMS--1301515}   

\author[Ya.\ Pesin]{Yakov Pesin}

\address{Department of Mathematics, Penn State University, University Park, PA 16802, USA}

\email{pesin@math.psu.edu}

\thanks{Ya.\ P.\ was supported in part by NSF grant DMS--1400027.}

\subjclass{37D25, 37D35, 37A25, 37E30}

\begin{abstract}
We show that the space of hyperbolic ergodic measures of a given index supported on an isolated homoclinic class is path connected and entropy dense provided that any two hyperbolic periodic points in this class are homoclinically related. As a corollary we obtain that the closure of this space is also path connected.
\end{abstract}

\maketitle

\section{Introduction}\label{s.intro}

In this paper we consider homoclinic classes of periodic points for $C^{1+\alpha}$ diffeomorphisms of compact manifolds and we discuss two properties of the space of invariant measures supported on them and equipped with the  weak$^*$-topology -- connectedness and entropy density of the subspace of hyperbolic ergodic measures. The study of connectedness of the latter space was initiated by Sigmund in a short article \cite{S77}. He established path connectedness of this space in the case of transitive topological Markov shifts and as a corollary, of transitive Axiom $A$ diffeomorphisms. Sigmund's idea was to show first that any two periodic measures (i.e., invariant atomic measures on periodic points) can be connected by a continuous path of ergodic measures and second that if one of the two periodic measures lies in a small neighborhood of another one, then the whole path can be chosen to lie in this neighborhood. In order to carry out the first step Sigmund shows that any periodic measure can be approximated by a Markov measure and that any two Markov measures can be connected by a path of Markov measures. We use Sigmund's idea in our proof of Theorem \ref{t.main}.

A different approach to Sigmund's theorem is to show that ergodic measures on a transitive topological Markov shift are dense in the space of all invariant measures. Since the latter space is a Choquet simplex and ergodic measures are its extremal points, it means that this space is the Poulsen simplex (which is unique up to an affine homeomorphism). The desired result now follows from a complete description of the Poulsen simplex given in \cite{LOS} (see also \cite{GK}). 

Since Sigmund's work the interest to the study of connectedness of the space of hyperbolic ergodic measures has somehow been lost,\footnote{At the time of writing this paper there is no single reference to the paper by Sigmund \cite{S77} in {\it MathSciNet}.} and only recently it has regained attention.\footnote{Soon after this paper was completed, several new works related to the subject appeared, see \cite{BZ, DGR2}.} In \cite{GT} Gogolev and Tahzibi, motivated by their study of existence of non-hyperbolic invariant measures, raised a question of whether the space of ergodic measures invariant under some partially hyperbolic systems is path connected. This includes, in particular, the famous example by Shub and Wilkinson \cite{SW}. Some results on connectedness and other topological properties of the space of invariant measures were obtained in \cite{BG, GK}.

All known proofs of connectedness of the space of invariant measures are based on approximating invariant measures by either measures supported on periodic orbits or Markov ergodic measures supported on invariant horseshoes. It is therefore natural to ask whether such approximations can be arranged to also ensure convergence of entropies. If this is possible, the space of approximants is called {\it entropy dense}. Some results in this direction were obtained in \cite{FO}. We stress that approximating {\it hyperbolic ergodic} measures with positive entropy by ``nice'' measures supported on invariant horseshoes so that the convergence of entropies is also guaranteed, was first done by Katok in \cite{K} (see also \cite{BP, KH}). We use this result in the proof of our Theorem \ref{t.entropydense} where we approximate also some hyperbolic ergodic measures with zero entropy as well as non-ergodic measures.

We shall now state our results. Consider a $C^{1+\alpha}$-diffeomorphism $f:M\to M$ of a compact smooth manifold $M$. Let $p\in M$ be a hyperbolic periodic point. By the index $s(p)$ of $p$ we mean the dimension of the invariant stable manifold of $p$.

We say that a hyperbolic periodic point $q\in M$ is \emph{homoclinically related} to $p$ and write $q\sim p$ if the stable manifold of the orbit of $q$ intersects  transversely the unstable manifold of the orbit of $p$ and vice versa.\footnote{The stable (respectively, unstable) manifold of the orbit of a periodic point is the union of stable (respectively, unstable) manifolds through every point on the orbit. If $q\sim p$, then $s(q)=s(p)$.} Notice that this is an equivalence relation. We denote by $\mathcal{H}(p)$ the \emph{homoclinic class} associated with the point $p$, that is the closure of the set of hyperbolic periodic points homoclinically related to $p$. Note that $\mathcal{H}(p)$ is $f$-invariant. Homoclinic classes were introduced by Newhouse in \cite{New}.

A basic hyperbolic set of an Axiom $A$ diffeomorphism gives the simplest example of a homoclinic class, but in general the set $\mathcal{H}(p)$ can have a much more complicated structure and dynamical properties. In particular, it can contain non-hyperbolic periodic points, and it can support non-hyperbolic (periodic or not) measures in a robust way, see \cite{BBD, BDGor, DG,  KN} for a more detailed discussion.\footnote{It is conjectured  that existence of non-hyperbolic ergodic measures is a characteristic property of non-hyperbolic homoclinic classes, see \cite{BBD, DG}.} It can also contain in a robust way hyperbolic periodic orbits whose index is different than the index of $p$, see \cite{BD, GI, Gor1}. Moreover -- and this is of importance for us in this paper -- there may exist hyperbolic periodic points in $\mathcal{H}(p)$ of the same index as $p$ that are {\it not} homoclinically related to $p$, see \cite{DG, DHRS}. Besides, it can happen that periodic orbits outside the homoclinic class $\mathcal{H}(p)$ accumulate to  $\mathcal{H}(p)$; for example, this is part of the Newhouse phenomena, and also occurs in the family of standard maps, see \cite{Du, Gor}. We wish to avoid both of these complications, and we therefore, impose the following crucial requirements on the homoclinic class $\mathcal{H}(p)$: 
\begin{enumerate}
\item[(H1)] \emph{For any hyperbolic periodic point $q\in\mathcal{H}(p)$ with $s(q)=s(p)$ we have $q\sim p$.}
\item[(H2)] \emph{The homoclinic class $\mathcal{H}(p)$ is isolated, i.e., there is an open neighborhood $U(\mathcal{H}(p))$ of $\mathcal{H}(p)$ such that  
$\mathcal{H}(p)=\bigcap_{n\in \mathbb{Z}}f^n(U(\mathcal{H}(p)))$.}
\end{enumerate}
We stress that these requirements do hold in many interesting cases, see examples in Section \ref{s.examples}. In particular, Condition (H2) holds if the map $f$ has only one homoclinic class. This is the case in Examples 1 and 2  in Section \ref{s.examples}. We also note a result in \cite{BG} that is somewhat related to Condition (H2): if the map $f$ admits a dominated splitting of index $s$, then a linear combination of hyperbolic ergodic measures of index $s$ can be approximated by a sequence of hyperbolic ergodic measures of index $s$ if and only if their homoclinic classes coincide.

The space of all invariant ergodic measures supported on $\mathcal{H}(p)$ can be extremely rich and contain hyperbolic measures with different number of positive Lyapunov exponents as well as non-hyperbolic measures. We denote by $\mathcal{M}_p$ the space of all hyperbolic invariant measures supported on $\mathcal{H}(p)$ for which the number of negative Lyapunov exponents at almost every point is exactly $s(p)$. We say that $\mu$ has index $s(p)$. Further, we denote by $\mathcal{M}_p^e$ the space of all hyperbolic \emph{ergodic} measures in $\mathcal{M}_p$. We assume that the space $\mathcal{M}_p$ is equipped with the weak$^*$-topology.

\begin{theorem}\label{t.main}
Under Conditions (H1) and (H2) the space $\mathcal{M}_p^e$ is path connected.
\end{theorem}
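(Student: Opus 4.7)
The plan is to follow Sigmund's strategy and reduce the statement to his path-connectedness result for transitive subshifts of finite type. The argument will proceed in three stages: (i) show that every $\mu\in\mathcal{M}_p^e$ is a weak$^*$-limit of periodic measures supported on hyperbolic periodic orbits of index $s(p)$ homoclinically related to $p$; (ii) show that any two such periodic measures $\delta_{\gamma_1},\delta_{\gamma_2}$ can be joined by a continuous path in $\mathcal{M}_p^e$, and that the path can be confined to a prescribed weak$^*$-neighborhood when $\delta_{\gamma_1}$ and $\delta_{\gamma_2}$ are themselves close together; and (iii) assemble these ingredients into a path between arbitrary $\mu_0,\mu_1\in\mathcal{M}_p^e$ by a nested-neighborhood concatenation, parametrizing the local paths on shrinking intervals $[1/(n+1),1/n]$ and extending by continuity to the endpoints.

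The central observation for stage (ii) is that under (H1) and (H2), whenever $\gamma_1$ and $\gamma_2$ are hyperbolic periodic orbits of index $s(p)$ lying in $\mathcal{H}(p)$, each is homoclinically related to $p$ and hence the two are homoclinically related to each other. A standard $\lambda$-lemma construction then produces a basic set $\Lambda\subset\mathcal{H}(p)$ containing both orbits on which $f$ is topologically conjugate to a transitive subshift of finite type; here (H2) is used to guarantee $\Lambda\subset\mathcal{H}(p)$. Sigmund's theorem supplies a continuous path of ergodic measures from $\delta_{\gamma_1}$ to $\delta_{\gamma_2}$ inside the space of invariant measures on $\Lambda$, passing through Markov approximations. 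By taking $\Lambda$ small in diameter and whose periodic measures cluster near a reference measure, this path can be confined to any prescribed weak$^*$-neighborhood of that reference.

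For stage (i), Katok's horseshoe theorem furnishes, for any $\mu\in\mathcal{M}_p^e$ with $h(\mu)>0$, horseshoes $\Lambda_n$ of index $s(p)$ whose spaces of invariant measures approach $\mu$ in the weak$^*$-topology; by (H2) these horseshoes lie in $\mathcal{H}(p)$ for large $n$, and by (H1) all of their periodic points are homoclinically related to $p$, so their periodic measures belong to $\mathcal{M}_p^e$ and weak$^*$-approximate $\mu$. The zero-entropy case is treated analogously using the Katok closing lemma applied to Pesin regular sets of $\mu$, which yields approximating periodic orbits of index $s(p)$ inside $\mathcal{H}(p)$. The main technical obstacle I anticipate is the quantitative local version of Sigmund's construction needed in stage (iii): the connecting path between two nearby periodic measures must be squeezed into a small weak$^*$-neighborhood of their common approximate location $\mu_0$, which requires simultaneously controlling the diameter of the supporting basic set $\Lambda$ and the behavior of the intermediate Markov measures as they move across the simplex of invariant measures on $\Lambda$. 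Once this local control is in place, the nested concatenation of stage (iii) produces the desired continuous path and completes the proof.
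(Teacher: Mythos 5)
Your proposal follows essentially the same route as the paper: approximate each ergodic measure by periodic measures via the Katok closing lemma (with (H2) guaranteeing the approximating orbits lie in $\mathcal{H}(p)$), connect periodic measures through Sigmund's theorem on a horseshoe containing both orbits (obtained from (H1) and the Smale--Birkhoff theorem), and concatenate local paths over shrinking parameter intervals. The technical point you flag --- confining the connecting path between two nearby periodic measures to a small weak$^*$-neighborhood --- is precisely the second statement of the paper's Lemma \ref{l.1}, which the paper likewise states as the needed local ingredient.
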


Notice that without Conditions (H1) and (H2) the conclusion of Theorem \ref{t.main} may fail, see Subsection \ref{ss.2}.

It follows immediately from Theorem \ref{t.main} that the closure of
$\mathcal{M}_p^e$ is connected. In fact, a stronger statement holds.

\begin{theorem}\label{t.2}
Under Conditions (H1) and (H2) the closure of the space $\mathcal{M}_p^e$ is 
path connected.
\end{theorem}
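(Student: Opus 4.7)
The plan is to deduce Theorem~\ref{t.2} from Theorem~\ref{t.main} via the Lindenstrauss--Olsen--Sternfeld characterization of the Poulsen simplex. Recall that every metrizable Choquet simplex whose set of extreme points is dense is affinely homeomorphic to the unique Poulsen simplex, and this simplex is contractible; in particular it is path-connected, and its set of extreme points is also path-connected (in fact homeomorphic to $\ell^2$). I would show that $K:=\overline{\mathcal{M}_p^e}$ is itself a Poulsen simplex, after which path-connectedness of $K$ is immediate.

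The first step is to verify that $K$ is convex. Given $\mu_1,\dots,\mu_k\in\mathcal{M}_p^e$ and weights $t_i\ge 0$ with $\sum_i t_i=1$, the task is to approximate $\sum_i t_i\mu_i$ weakly by a single element of $\mathcal{M}_p^e$. This is a Sigmund-type construction in the same spirit as the one used in the proof of Theorem~\ref{t.main}: approximate each $\mu_i$ by a hyperbolic periodic measure, use hypothesis (H1) to link the corresponding periodic orbits by homoclinic connections, build a pseudo-orbit that spends a fraction $t_i$ of its time near the $i$-th orbit, shadow it by a genuine periodic orbit, and finally embed the resulting periodic measure in a small Katok horseshoe to produce an ergodic element of $\mathcal{M}_p^e$ whose time average is close to $\sum_i t_i\mu_i$. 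This yields $\sum_i t_i\mu_i\in K$, so $K$ is convex; and since $\mathcal{M}_p^e$ consists of ergodic measures and is dense in $K$, the extreme points of $K$ are dense.

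The last step is to upgrade $K$ from a closed convex set with dense extreme points to a Choquet simplex; LOS then finishes the proof. The natural route is to identify $K$ with a face of the ambient Choquet simplex $\mathcal{M}(f)$ of all $f$-invariant Borel probability measures on $M$: under (H2) every ergodic component of a measure in $K$ is supported on $\mathcal{H}(p)$, and under (H1) those ergodic components with $s(p)$ positive exponents already belong to $\mathcal{M}_p^e\subseteq K$; the remaining ergodic components (with some exponents becoming zero in a limit) must also lie in $K$ by an approximation argument analogous to the convexity step. Since faces of Choquet simplices are Choquet simplices, $K$ is then a Poulsen simplex and LOS yields path-connectedness. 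The main obstacle is precisely this face verification, because the number of positive Lyapunov exponents is only upper semicontinuous, so $K$ may contain ergodic boundary measures with fewer than $s(p)$ positive exponents; hypotheses (H1) and (H2) are exactly what prevents such boundary components from escaping $K$ or breaking the face property.
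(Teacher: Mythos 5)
Your overall strategy --- show that $K=\overline{\mathcal{M}_p^e}$ is a Poulsen simplex and invoke the Lindenstrauss--Olsen--Sternfeld characterization \cite{LOS} --- is exactly the route the paper takes, and your observation that $\mathcal{M}_p^e\subseteq\mathrm{ext}(K)$ is dense is correct. One structural remark: the convexity step does not require a fresh Sigmund-type shadowing construction. The paper notes that $\mathcal{M}_p$ is convex (a convex combination of hyperbolic measures of index $s(p)$ is again such a measure), so every finite convex combination $\sum_k\alpha_k t_k$ with $t_k\in\mathcal{M}_p^e$ already lies in $\mathcal{M}_p$, and Theorem \ref{t.entropydense} --- proved beforehand --- gives $\mathcal{M}_p\subseteq\overline{\mathcal{M}_p^e}$; hence $\overline{\mathcal{M}_p}=\overline{\mathcal{M}_p^e}$ and convexity of $K$ is immediate. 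Your pseudo-orbit/horseshoe argument essentially re-proves the density half of Theorem \ref{t.entropydense}, which is legitimate but redundant given that result.

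The genuine gap is the step you yourself label ``the main obstacle'': verifying that $K$ is a Choquet simplex. This cannot be waved away, because a closed convex subset of a Choquet simplex with dense extreme points need not be a simplex, and Milman's theorem only places $\mathrm{ext}(K)$ inside $\overline{\mathcal{M}_p^e}$ --- it does not say these extreme points are ergodic. Your proposed face argument also has a real difficulty that you correctly diagnose but do not resolve: a limit of measures in $\mathcal{M}_p^e$ may have ergodic components that are non-hyperbolic or of a different index (indeed, in Examples 1 and 2 of the paper such non-hyperbolic ergodic measures exist on $\mathcal{H}(p)$), and it is not established that all such components remain in $K$, which is what the face property requires. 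So as written your proposal stops one step short of a proof. To be fair, the paper is equally terse at this exact point --- it asserts without further argument that $\overline{\mathcal{M}_p}$ ``is a simplex'' before concluding it is the Poulsen simplex --- so your write-up faithfully reproduces the paper's argument, including the one assertion that would need additional justification to be fully rigorous.
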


\begin{remark}
It is interesting to notice that the closure of $\mathcal{M}_p^e$ is {\it not} a Choquet simplex (and hence, not a Poulsen simplex), see Proposition 2.7 in \cite{BZ}.
\end{remark}

We shall now discuss the entropy density of the space $\mathcal{M}_p^e$.

\begin{defi}
A subset $S\subseteq\mathcal{M}_p$ is {\it entropy dense} in $\mathcal{M}_p$ if for any $\mu\in\mathcal{M}_p$ there exists a sequence of measures
$\{\xi_n\}_{n\in\mathbb{N}}\subset S$ such that $\xi_n\to\mu$ and
$h_{\xi_n}\to h_\mu$ as $n\to \infty$.
\end{defi}

\begin{theorem}\label{t.entropydense}
Under Conditions (H1) and (H2) the space $\mathcal{M}_p^e$ is entropy dense in $\mathcal{M}_p$.
\end{theorem}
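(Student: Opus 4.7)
My plan is to follow the classical three-stage strategy for entropy density: ergodic decomposition of $\mu$, Katok-style approximation of each ergodic component by a horseshoe-supported measure, and a gluing construction that uses (H1) to merge finitely many horseshoes into a single transitive basic set. Condition (H2) ensures the horseshoes produced stay inside $\mathcal{H}(p)$ throughout.

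First I would use the ergodic decomposition $\mu = \int \nu \, d\tau(\nu)$. Because $\mu$ has exactly $s(p)$ positive Lyapunov exponents $\mu$-almost everywhere, the same is true of $\tau$-almost every ergodic component, so $\tau$-almost every $\nu$ lies in $\mathcal{M}_p^e$. Using the Jacobs formula $h_\mu = \int h_\nu \, d\tau(\nu)$, together with the (upper semi-continuous, hence Borel) measurability and boundedness of $\nu \mapsto h_\nu$, I would discretize $\tau$ into atomic measures $\tau_n = \sum_{i=1}^{k_n} \lambda_i^{(n)} \delta_{\nu_i^{(n)}}$ with $\nu_i^{(n)} \in \mathcal{M}_p^e$, so that the convex combinations $\mu_n := \sum_i \lambda_i^{(n)} \nu_i^{(n)}$ satisfy $\mu_n \to \mu$ weakly and $h_{\mu_n} = \sum_i \lambda_i^{(n)} h_{\nu_i^{(n)}} \to h_\mu$. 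The standard way to arrange this is to partition $\mathcal{M}_p^e$ into small Borel pieces on which both the weak${}^*$ diameter and the oscillation of $h$ are small (using Lusin's theorem to handle the semicontinuity), then pick a representative from each piece and set $\lambda_i^{(n)} = \tau$(piece). This reduces the theorem to approximating each finite convex combination $\mu_n$ by a single member of $\mathcal{M}_p^e$.

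Next, for each ergodic component $\nu_i^{(n)}$ with $h_{\nu_i^{(n)}} > 0$, I would apply the strong form of Katok's theorem from \cite{K,KH,BP} to obtain a hyperbolic basic set $\Lambda_{i,k}^{(n)} \subset \mathcal{H}(p)$ (which remains in $\mathcal{H}(p)$ because of (H2)) and an ergodic measure $\eta_{i,k}^{(n)}$ on it with $\eta_{i,k}^{(n)} \to \nu_i^{(n)}$ weakly and $h_{\eta_{i,k}^{(n)}} \to h_{\nu_i^{(n)}}$ as $k \to \infty$. For zero-entropy ergodic components, I would instead invoke Katok's closing lemma for hyperbolic measures to approximate $\nu_i^{(n)}$ weakly by periodic-orbit measures supported in $\mathcal{H}(p)$, whose entropies vanish automatically. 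By (H1), all the involved periodic points (which have index $s(p)$) are homoclinically related to $p$ and hence pairwise to each other.

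The hard part is the third stage: merging the finite family $\{\Lambda_{i,k}^{(n)}\}_i$ into a single ergodic measure $\xi^{(n,k)} \in \mathcal{M}_p^e$ that approximates $\sum_i \lambda_i^{(n)} \eta_{i,k}^{(n)}$ both weakly and in entropy. Using (H1), standard hyperbolic theory (Smale horseshoes built around heteroclinic intersections) produces a transitive basic set $\Lambda^{(n,k)} \subset \mathcal{H}(p)$ containing every $\Lambda_{i,k}^{(n)}$ and topologically conjugate to a mixing subshift of finite type. On this symbolic model I would concatenate Shannon--McMillan--Breiman typical blocks of length $N$ drawn from $\eta_{i,k}^{(n)}$ with relative frequencies $\lambda_i^{(n)}$, joined by admissible connecting words of length $o(N)$, and then randomize the initial position to obtain an ergodic measure $\xi^{(n,k)}$. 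The main technical estimate is the combinatorial entropy count showing that the cost of the connecting words is negligible compared to the contribution $N \sum_i \lambda_i^{(n)} h_{\eta_{i,k}^{(n)}}$ of the typical blocks; this is the same kind of bookkeeping that underlies Sigmund's theorem and Pfister--Sullivan-type large deviation arguments. A diagonal extraction in $n$ and $k$ then produces the required sequence in $\mathcal{M}_p^e$ converging to $\mu$ weakly and in entropy.
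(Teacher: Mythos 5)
Your proposal is correct in outline and its first two stages coincide with the paper's proof: both start from the ergodic decomposition and the affinity of entropy to reduce to a finite convex combination $\sum_k\alpha_k t_k$ of measures in $\mathcal{M}_p^e$, both invoke Katok's horseshoe approximation for the positive-entropy components and a periodic-orbit (closing lemma) approximation for the zero-entropy ones, and both use (H1) to homoclinically relate the resulting basic sets and (H2) to keep them inside $\mathcal{H}(p)$. Where you genuinely diverge is the final ergodization step. The paper exploits the fact that Katok's horseshoes can be taken with their measures of maximal entropy, which are Markov; after embedding all the horseshoes in one transitive horseshoe $\Lambda\subset\mathcal{H}(p)$ with a common Markov partition refining the individual ones, the convex combination $\sum_k\alpha_k\nu_k$ is itself a (non-ergodic) Markov measure, and one perturbs its stochastic matrix to an irreducible one, using the continuity of Markov measures and of their entropies in the matrix entries. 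You instead run a Pfister--Sullivan/F\"ollmer--Orey style concatenation of Shannon--McMillan--Breiman typical blocks on the ambient subshift. Both routes work: the Markov perturbation is shorter and sidesteps the combinatorial bookkeeping entirely, while your argument is more flexible in that it does not require the approximating measures on the horseshoes to be Markov. Two small cautions about your version: the ambient basic set is conjugate to a transitive, not necessarily mixing, subshift of finite type, so you should either pass to a power, include a fixed point in the construction, or note that entropy density of ergodic measures holds for transitive SFTs as well; and the phrase ``randomize the initial position to obtain an ergodic measure'' hides the real work --- averaging a concatenated orbit over shifts gives invariance but not ergodicity, so you need the genuine construction (independent random choice of blocks, or the entropy distribution principle on a Cantor set of concatenations) or an explicit citation to where it is carried out. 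Also note the paper's treatment of zero-entropy components is slightly different from yours: rather than feeding the periodic orbits directly into the merging step, it builds a small-entropy horseshoe around each such orbit via the first return map, so that every component is handled uniformly by a Markov measure on a nontrivial horseshoe; your variant, in which periodic blocks enter the concatenation directly, is equally legitimate.
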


\section{Examples}\label{s.examples}

In this section we present some examples that illustrate importance of Conditions (H1) and (H2).

\subsection{Non-hyperbolic homoclinic classes satisfying Conditions (H1) and (H2).} We describe a class of diffeomorphisms with a partially hyperbolic attractor which is the homoclinic class of any of its periodic points and which satisfies Conditions (H1) and (H2). We follow \cite{BDPP}. Let $f$ be a $C^{1+\alpha}$ diffeomorphism of a compact smooth manifold $M$ and $\Lambda$ a topological attractor for $f$. This means that there is an open set $U\subset M$ such that $\overline{f(U)}\subset U$ and $\Lambda=\bigcap_{n\ge 0}f^n(U)$. We assume that $\Lambda$ is a partially hyperbolic set for $f$, that is for every $x\in\Lambda$ there is an invariant splitting of the tangent space $T_xM=E^s(x)\oplus E^c(x)\oplus E^u(x)$ into stable $E^s(x)$, central
$E^c(x)$ and unstable $E^u(x)$ subspaces such that with respect to some Riemannian metric on $M$ we have that for some constants
$$
0<\lambda_1<\lambda_2<\lambda_3<\lambda_4, \quad \lambda_1<1, \quad \lambda_4>1
$$
the following holds:
\begin{enumerate}
\item $\|dfv\|<\lambda_1\|v\|$ for every $v\in E^s(x)$,
\item $\lambda_2\|v\|<\|dfv\|<\lambda_3\|v\|$ for every $v\in E^c(x)$,
\item $\|dfv\|>\lambda_4\|v\|$ for every $v\in E^u(x)$.
\end{enumerate}
If $\Lambda$ is a partially hyperbolic attractor for $f$, then for every
$x\in\Lambda$ we denote by $V^u(x)$ and $W^u(x)$ the local and respectively global unstable leaves through $x$. It is known that for every $x\in\Lambda$ and $y\in W^u(x)$ one has $T_yW^u(x)=E^u(y)$, $f(W^u(x))=W^u(f(x))$ and $W^u(x)\subset\Lambda$. Moreover, the collection of all global unstable leaves $W^u(x)$ forms a continuous lamination of $\Lambda$ with smooth leaves, and if $\Lambda=M$, then it is a continuous foliation of $M$ with smooth leaves.

An invariant measure $\mu$ on $\Lambda$ is called a $u$-measure if the conditional measures it generates on local unstable leaves $V^u(x)$ are equivalent to the leaf volume on $V^u(x)$ induced by the Riemannian metric. It is shown in \cite{PS} that any partially hyperbolic attractor admits a
$u$-measure: any limit measure for the sequence of measures
$$
\mu_n=\frac1n\sum_{k=0}^{n-1}f_*^k m
$$
is a $u$-measure on $\Lambda$. Here $m$ is the Riemannian volume in a sufficiently small neighborhood of the attractor (see \cite{PS} for more details and other ways for constructing $u$-measures).

In general a $u$-measure may have some or all Lyapunov exponents along the central direction to be zero.\footnote{Clearly, the Lyapunov exponents in the stable direction are negative while the Lyapunov exponents in the unstable direction are positive.} Therefore, following \cite{BDPP} we say that a
$u$-measure $\mu$ has negative central exponents on an invariant subset
$A\subset\Lambda$ of positive measure if for every $x\in A$ and
$v\in T_xE^c(x)$ the Lyapunov exponent $\chi(x,v)<0$.

We consider the following requirement on the map $f|\Lambda$:
\begin{enumerate}
\item[(D)] for every $x\in\Lambda$ the positive semi-trajectory of the global unstable leaf $W^u(x)$ is dense in $\Lambda$, that is
$$
\overline{\bigcup_{n\ge 0}\,f^n(W^u(x))}=\overline{\bigcup_{n\ge 0}\,W^u(f^n(x))}=\Lambda.
$$
\end{enumerate}
Condition (D) clearly holds if the unstable lamination is minimal, i.e., if every leaf of the lamination is dense in $\Lambda$. It is shown in \cite{BDPP} that if
$\mu$ is a $u$-measure on $\Lambda$ with negative central exponents on an invariant subset of positive measure and if $f$ satisfies Condition (D), then 1)
$\mu$ has negative central exponents at almost every point $x\in\Lambda$; 2) $\mu$ is the unique $u$-measure for $f$ supported on the whole $\Lambda$; and 3) the basin of attraction for $\mu$ coincides with the open set $U$.

It is easy to see that in this case:
\begin{enumerate}
\item hyperbolic periodic points whose index is equal to the dimension of the stable leaves\footnote{This dimension is $\dim E^s+\dim E^c$.} are dense in the attractor $\Lambda$; the homoclinic class of each of these periodic points coincides with  $\Lambda$;
\item the homoclinic class satisfies Conditions (H1) and (H2), and hence, Theorems \ref{t.main}, \ref{t.2}, and \ref{t.entropydense} are applicable.
\end{enumerate}

Let $f_0$ be a partially hyperbolic diffeomorphism which is either 1) a skew product with the map in the base being a topologically transitive Anosov diffeomorphism or 2) the time-$1$ map of an Anosov flow. If $f$ is a small perturbation of $f_0$ then $f$ is partially hyperbolic and by \cite{HPS}, the central distribution of $f$ is integrable. Furthermore, the central leaves are compact in the first case and there are compact leaves in the second case. It is shown in \cite{BDPP} that $f$ has minimal unstable foliation provided there exists a compact periodic central leaf $\mathcal{C}$ (i.e.,
$f^\ell(\mathcal{C})=\mathcal{C}$ for some $\ell\ge 1$) for which the restriction $f^\ell|\mathcal{C}$ is a minimal transformation.

Furthermore, it follows from the results in \cite{BB} that starting from a volume preserving partially hyperbolic diffeomorphism $f_0$ with one-dimensional central subspace, it is possible to construct a $C^2$ volume preserving diffeomorphism $f$ which is arbitrarily  $C^1$-close to $f_0$ and has negative central exponents on a set of positive volume. Moreover, if $\mathcal{C}$ is a compact periodic central leaf, then $f$ can be arranged to coincide with $f_0$ in a small neighborhood of the trajectory of $\mathcal{C}$.

We now consider the two particular examples.

\vspace{5pt}
{\bf Example 1.} Consider the time-$1$ map $f_0$ of the geodesic flow on a compact surface of negative curvature. Clearly, $f_0$ is partially hyperbolic and has a dense set of compact periodic central leaves. It follows from what was said above that there is a volume preserving perturbation $f$ of $f_0$ such that
\begin{enumerate}
\item $f$ is of class $C^2$ and is arbitrary close to $f_0$ in the $C^1$-topology;
\item $f$ is a partially hyperbolic diffeomorphism with one-dimensional central subspace;
\item there exists a central leaf $\mathcal{C}$ such that the restriction
$f^\ell|\mathcal{C}$ is a minimal transformation (here $\ell$ is the period of the leaf);
\item $f$ has negative central exponents on a set of positive volume;
\item the unstable foliation for $f$ is minimal and hence, satisfies Condition (D).
\end{enumerate}
We conclude that in this example the whole manifold is the homoclinic class of every hyperbolic periodic point of index two and that this class satisfies Conditions (H1) and (H2).

\vspace{5pt}
{\bf Example 2.} Consider the map $f_0=A\times R$ of the $3$-torus
$T^3=T^2\times T^1$ where $A$ is a linear Anosov automorphism of the $2$-torus $T^2$ and $R$ is an irrational rotation of the circle $T^1$. It follows from what was said above that there is a volume preserving perturbation $f$ of $f_0$ such that the properties (1) -- (5) in the previous example hold, and hence the unique homoclinic class satisfies Conditions (H1) and (H2).

\begin{remark}
It was shown in \cite{BDU} that the set of partially hyperbolic diffeomorphisms with one dimensional central direction contains a $C^1$ open and dense subset of diffeomorphisms with minimal unstable foliation. However, in our examples  we use preservation of volume to ensure negative central Lyapunov exponents on a set of positive volume, so we cannot immediately apply the result in \cite{BDU} to obtain an open set of systems for which Conditions (H1) and (H2) hold, compare with Problem 7.25 from \cite{BDV}.
\end{remark}

\begin{remark}
In both Examples 1 and 2 the map possesses a non-hyperbolic ergodic invariant measure (e.g. supported on the compact periodic leaf). We believe that in these examples presence of non-hyperbolic ergodic invariant measures is persistent under small perturbations.  Indeed, since the central subspace is one dimensional, the central Lyapunov exponent with respect to a given ergodic measure is an integral of a continuous function (i.e., log of the expansion rate along the central subspace) over this measure, existence of periodic points of different indices combined with (presumable) connectedness of the space of ergodic measures should imply existence of a non-hyperbolic invariant ergodic measure. See \cite{BBD, BDGor, DGor, GIKN} for the related results and discussion.
\end{remark}

\subsection{Homoclinic classes that do not satisfy Conditions (H1) and (H2).}\label{ss.2}

There is an example of an invariant set for a partially hyperbolic map with one dimensional central subspace which is a homoclinic class containing two non-homoclinically related hyperbolic periodic orbits of the same index, hence, not satisfying Condition (H1), see \cite{DG,  DGR, DHRS}. Moreover, the space of hyperbolic ergodic measures supported on this homoclinic class is not connected due to the fact that the set of all central Lyapunov exponents is split into two disjoint closed intervals, see Remark 5.2 in \cite{DGR}.

As we already mentioned in Introduction, Condition (H2) does not always hold even for surface diffeomorphisms, see for example, \cite{Du, Gor}. This condition ensures that the hyperbolic horseshoes and periodic orbits that we use to approximate a given hyperbolic ergodic measure do belong to the initial homoclinic class.  We do not know whether  given a not necessarily isolated  homoclinic class, every hyperbolic ergodic invariant measure supported on this homoclinic class can always be approximated in such a way.

\section{Proofs}\label{s.proof}

The space $\mathcal{M}$ of all probability Borel measures on $M$ equipped with the weak$^*$-topology is metrizable with the distance $d_{\mathcal{M}}$ given by
\begin{equation}\label{e.dist}
d_{\mathcal{M}}(\mu,\nu)=\sum_{k=1}^{\infty}\frac{1}{2^k}\left|\int \psi_k\,d\mu-\int \psi_k\,d\nu\right|,
\end{equation}
where $\{\psi_k\}_{k\in\mathbb{N}}$ is a dense subset in the unit ball in $C^0(M)$. While the distance defined in this way depends on the choice of the subset $\{\psi_k\}_{k\in \mathbb{N}}$, the topology it generates does not. We will choose the functions $\psi_k$ to be smooth.

\begin{proof}[Proof of Theorem \ref{t.main}]

By a hyperbolic periodic measure $\mu_q$ we mean an atomic ergodic measure equidistributed on a hyperbolic periodic orbit of $q$.

\begin{lemma}\label{l.0}
Let  $q_1, q_2\in\mathcal{H}(p)$ be hyperbolic periodic points with index $s(q_1)=s(q_2)=s(p)$. Then the hyperbolic periodic measures $\mu_{q_1}$ and $\mu_{q_2}$ can be connected in $\mathcal{M}_p^e$ by a continuous path.
\end{lemma}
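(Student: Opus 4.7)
The plan is to produce a basic hyperbolic set (horseshoe) $\Lambda \subset \mathcal{H}(p)$ that contains both periodic orbits and on which the dynamics is conjugate to a transitive subshift of finite type, and then invoke Sigmund's theorem on that subshift.

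First, I would use Condition (H1) to conclude that $q_1 \sim p$ and $q_2 \sim p$, and hence, by transitivity of the homoclinic relation (an inclination-lemma argument), $q_1 \sim q_2$. Next, by a standard Smale--Birkhoff construction, since $q_1$ and $q_2$ are homoclinically related hyperbolic periodic points of the same index, one can enclose their orbits in a locally maximal, topologically mixing, uniformly hyperbolic basic set $\Lambda$ on which $f^N$ (for a suitable iterate) is topologically conjugate to a transitive topological Markov shift $(\Sigma_A, \sigma)$. Because every periodic point of $\Lambda$ is homoclinically related to $q_1$ (and hence to $p$), we have $\Lambda \subset \mathcal{H}(p)$; and because $\Lambda$ is uniformly hyperbolic of index $s(p)$, every ergodic invariant measure supported on $\Lambda$ automatically lies in $\mathcal{M}_p^e$.

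Now I would apply Sigmund's result \cite{S77}: on a transitive topological Markov shift, the space of ergodic shift-invariant Borel probability measures is path connected (via approximation by Markov measures and the fact that the set of Markov measures is path connected). Pulling the resulting continuous path of ergodic measures on $(\Sigma_A,\sigma)$ joining the two periodic measures back through the conjugacy and averaging over the iterate $N$ to restore $f$-invariance yields a continuous path in $\mathcal{M}_p^e$ from $\mu_{q_1}$ to $\mu_{q_2}$. Continuity with respect to $d_{\mathcal{M}}$ follows from continuity of the conjugacy and bounded convergence applied to the test functions $f_k$ in \eqref{e.dist}.

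The step that requires the most care is constructing the horseshoe $\Lambda$ containing both given periodic orbits in a single transitive component. The naive Smale--Birkhoff theorem produces a horseshoe near a transverse homoclinic intersection, but one must arrange that the orbits of $q_1$ and $q_2$ themselves belong to $\Lambda$ and that $\Lambda$ is topologically mixing (so that Sigmund's theorem applies). This is done by choosing finitely many transverse homoclinic intersection points linking the orbits of $q_1$, $q_2$ and, if needed, an auxiliary point produced by the $\lambda$-lemma, and then shadowing along a graph whose vertices are small neighborhoods of the points of the two orbits. Condition (H1) is exactly what makes this linking possible, while Condition (H2) guarantees that the resulting hyperbolic set, being a subset of the locally maximal invariant set, stays inside $\mathcal{H}(p)$ rather than leaking into nearby classes.
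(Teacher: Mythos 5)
Your proposal is correct and follows essentially the same route as the paper: use (H1) to conclude $q_1\sim q_2$, apply the Smale--Birkhoff construction to obtain a locally maximal transitive hyperbolic horseshoe containing both orbits, and then invoke Sigmund's path-connectedness result for transitive topological Markov shifts. The additional care you take (ensuring the horseshoe actually contains both orbits, lies in $\mathcal{H}(p)$, and that the symbolic model is transitive) is exactly what the paper leaves implicit.
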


\begin{proof}[Proof of Lemma \ref{l.0}]
By Condition (H1), the points $q_1$ and $q_2$ are homoclinically related. By the Smale-Birkhoff theorem, there is a hyperbolic horseshoe $\Lambda$\footnote{By a hyperbolic horseshoe we mean a locally maximal hyperbolic set $\Lambda$ which is totally disconnected and such that $f|\Lambda$ is topologically transitive.} that contains both $q_1$ and $q_2$. Lemma \ref{l.0} now follows from the results by Sigmund, see the proof of Theorem B in \cite{S77}.
\end{proof}

We wish to approximate a given hyperbolic measure by periodic measures. There are several results in this direction, see, for example, \cite[Theorem 15.4.7]{BP}. However, we need some specific properties of such approximations that are stated in the following lemma.

\begin{lemma}\label{l.1}
For any $\mu\in\mathcal{M}_p^e$ and any $\varepsilon>0$ the following statements hold:
\begin{enumerate}
\item There exists a hyperbolic periodic point $q\in \mathcal{H}(p)$ with $s(q)=s(p)$ such that we have
$d_{\mathcal{M}}(\mu_q, \mu)<\varepsilon$ for the corresponding hyperbolic periodic measure $\mu_q$;
\item There exists $\delta>0$ such that for any hyperbolic periodic points $q_1, q_2\in \mathcal{H}(p)$ with $s(q_1)=s(q_2)=s(p)$ if the corresponding hyperbolic periodic measures $\mu_{q_1}$ and $\mu_{q_2}$ lie in the $\delta$-neighborhood of the measure $\mu$, then there exists a continuous path
$\{\nu_t\}_{t\in [0,1]}\subset\mathcal{M}_p^e$ with $\nu_0=\mu_{q_1}$,
$\nu_1=\mu_{q_2}$ and such that
$d_{\mathcal{M}}(\nu_t, \mu)<\varepsilon$ for all $t\in [0,1]$.
\end{enumerate}
\end{lemma}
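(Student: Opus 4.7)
The plan is to combine two classical ingredients: the Pesin--Katok approximation of hyperbolic ergodic measures by measures supported on basic hyperbolic sets, and Sigmund's path-connectedness of ergodic measures on a transitive basic set \cite{S77}. Condition (H1) ensures that every horseshoe we manufacture of index $s(p)$ has its periodic points homoclinically related to $p$, so that the accompanying ergodic measures lie in $\mathcal{M}_p^e$; Condition (H2) ensures that a horseshoe built in a sufficiently small neighborhood of $\mathrm{supp}(\mu)\subset\mathcal{H}(p)$ is actually contained in $\mathcal{H}(p)$.

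For Part (1), when $h_\mu>0$ I would invoke Katok's horseshoe theorem (see e.g.\ \cite[Thm.\ 15.4.7]{BP}) to produce, for every $\eta>0$, a basic hyperbolic set $\Lambda_\eta\subset\mathcal{H}(p)$ of index $s(p)$ all of whose invariant measures are within $\eta$ of $\mu$ in the weak$^*$-topology; taking $\eta<\varepsilon$ and any periodic $q\in\Lambda_\eta$ gives the desired $\mu_q$. For the zero-entropy case Katok's theorem does not produce a horseshoe directly, but the Pesin closing lemma still yields a periodic point whose orbit shadows a long generic piece of a $\mu$-typical orbit, and Birkhoff's theorem then makes $\mu_q$ arbitrarily weak$^*$-close to $\mu$.

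For Part (2), set $\eta=\delta=\varepsilon/4$, fix the horseshoe $\Lambda_0\subset\mathcal{H}(p)$ delivered by Part (1) at scale $\eta$, and fix a reference periodic point $q_0\in\Lambda_0$. Given $\mu_{q_1},\mu_{q_2}$ inside the $\delta$-ball around $\mu$, Condition (H1) makes $q_0,q_1,q_2$ pairwise homoclinically related; the Smale--Birkhoff theorem together with standard $\lambda$-lemma arguments then produce a single transitive basic set $\tilde\Lambda\subset\mathcal{H}(p)$ containing $\Lambda_0$ and both periodic orbits. Sigmund's theorem on $\tilde\Lambda$, the same result already invoked in Lemma \ref{l.0}, yields a continuous path $\{\nu_t\}$ of ergodic measures from $\mu_{q_1}$ to $\mu_{q_2}$.

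The main obstacle is forcing this path to stay inside the $\varepsilon$-ball around $\mu$, because Sigmund's bare statement only places the path in the (possibly large) set of ergodic measures on $\tilde\Lambda$. What is actually needed is a quantitative refinement: the path can be chosen in an arbitrarily small weak$^*$-neighborhood of the convex segment $\{t\mu_{q_1}+(1-t)\mu_{q_2}:t\in[0,1]\}$. This is implicit in Sigmund's proof strategy---approximate each endpoint by a Markov measure on a subshift of finite type modeling $\tilde\Lambda$, linearly interpolate the transition probabilities, and then select ergodic Markov measures close to the interpolants---and I would isolate this refinement as a separate preparatory lemma. Granted it, the segment is within $\delta$ of $\mu$, the path is within $\varepsilon/4$ of the segment, and the triangle inequality gives $d_{\mathcal{M}}(\nu_t,\mu)<\delta+\varepsilon/4<\varepsilon$, as required.
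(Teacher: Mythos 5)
Your proposal is essentially correct and, for part (1), follows the same route as the paper: the paper's proof picks a $\mu$-generic Lyapunov--Perron regular point, applies the Birkhoff ergodic theorem to make the empirical measure $\varepsilon/2$-close to $\mu$, and then uses the closing lemma \cite[Lemma 15.1.2]{BP} to replace the orbit segment by a nearby periodic orbit, estimating the weak$^*$ distance of the two empirical measures by splitting the sum defining $d_{\mathcal{M}}$ at a finite level $L$. Your case split on $h_\mu>0$ versus $h_\mu=0$ is unnecessary: the closing-lemma argument is uniform in the entropy, and Katok's horseshoe theorem is reserved in the paper for the entropy-density statement (Theorem \ref{t.entropydense}), not for this lemma.

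For part (2) the comparison is more interesting, because the paper's written proof actually stops after establishing statement (1); statement (2) is left to the reader, with the intended argument indicated only in the introduction (Sigmund's two-step scheme: connect periodic measures by ergodic paths, and if one periodic measure lies in a small neighborhood of the other, keep the whole path in that neighborhood). Your argument supplies exactly this missing step: (H2) places $q_1,q_2$ in $\mathcal{H}(p)$, (H1) makes them homoclinically related, Smale--Birkhoff gives a common transitive basic set, and Sigmund's local refinement keeps the path near the segment joining the endpoints, whence the triangle inequality (using convexity of the ball for the metric \eqref{e.dist}) gives $d_{\mathcal{M}}(\nu_t,\mu)<\varepsilon$. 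The one soft spot is that you assert the quantitative refinement of \cite{S77} as ``implicit'' rather than proving or precisely citing it; since this refinement is exactly the content of Sigmund's local lemma and is the same ingredient the paper itself leans on, this is acceptable, but it should be stated and referenced explicitly rather than deferred. The auxiliary horseshoe $\Lambda_0$ and reference point $q_0$ in your construction play no role and can be dropped.
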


\begin{proof}[Proof of Lemma \ref{l.1}]
Let $\mathcal{R}$ be the set of all Lyapunov-Perron regular points, and for each $\ell\ge 1$ let $\mathcal{R}_\ell$ be the regular set (see \cite{BP} for definitions). There exists $\ell\in\mathbb{N}$ such that 
$\mu(\mathcal{R}_\ell)>0$. Fix $\varepsilon>0$. By Birkhoff's Ergodic Theorem, for a $\mu$-generic point $x\in\mathcal{R}_\ell$ there exists $N\in\mathbb{N}$ such that for any $n>N$
\begin{equation}\label{e.defdist}
d_{\mathcal{M}}\left(\frac{1}{n}\sum_{k=0}^{n-1}\delta_{f^k(x)}, \mu\right)<\frac{\varepsilon}{2}.
\end{equation}
Choose $L\in\mathbb{N}$ such that
$\sum_{k=L+1}^\infty\frac{1}{2^{k-1}}<\frac{\varepsilon}{4}$. Let $\{\psi_k\}$ be the dense collection of smooth functions $\{\psi_k\}$ in the definition \eqref{e.dist} of the distance $d_{\mathcal{M}}$ and let $C=C(\varepsilon)$ be the common Lipschitz constant of the functions $\{\psi_1,\dots, \psi_L\}$. Let $U(\mathcal{H}(p))$ be a neighborhood of $\mathcal{H}(p)$ such that 
$\mathcal{H}(p)=\bigcap_{n\in \mathbb{Z}}f^n(U(\mathcal{H}(p)))$; its existence is guaranteed by (H2). Let us now choose $\delta>0$ such that $C\delta<\frac{\varepsilon}{4}$ and $\delta$-neighborhood of $\mathcal{H}(p)$ is in $U$. Since $\mu$ is a hyperbolic measure, by \cite[Theorem 15.1.2]{BP}, there exists $n>N$ and a hyperbolic periodic point $y\in M$ of period $n$ such that $\text{dist}_M(f^k(x), f^k(y))<\delta$ for all $k=0, \dots, n-1$ and 
$s(y)=s(x)$. Then the orbit of $y$ is in $U$, and hence belongs to 
$\mathcal{H}(p)$. Also, we have
\begin{equation}\label{e.sums}
\begin{aligned}
d_{\mathcal{M}}&\left(\frac{1}{n}\sum_{k=0}^{n-1}\delta_{f^k(x)}, \frac{1}{n}\sum_{k=0}^{n-1}\delta_{f^k(y)}\right)\\
 \le&\sum_{k=1}^{L}\frac{1}{2^k}\left|\int \psi_kd\left(\frac{1}{n}\sum_{i=0}^{n-1}\delta_{f^i(x)}\right)-\int \psi_kd\left(\frac{1}{n}\sum_{i=0}^{n-1}\delta_{f^i(y)}\right)\right|+\sum_{k=L+1}^\infty\frac{1}{2^{k-1}} \\
\le &C\delta+\frac{\varepsilon}{4}<\frac{\varepsilon}{2}.
\end{aligned}
\end{equation}
The first statement of the lemma now follows from \eqref{e.defdist} and \eqref{e.sums}.

To prove the second statement let $q_1, q_2\in \mathcal{H}(p)$ be any hyperbolic periodic points such that $s(q_1)=s(q_2)=s(p)$ and the corresponding hyperbolic periodic measures $\mu_{q_1}$ and $\mu_{q_2}$ lie in the $\delta$-neighborhood of the measure $\mu$. By Conditions (H1) and (H2), the points $q_1$ and $q_2$ are homoclinically related and hence, there is a hyperbolic horseshoe which contains both points. The desired result now follows from \cite{S77} (see the proof of Theorem B).
\end{proof}
We now compete the proof of Theorem \ref{t.main}. Let $\eta$ and
$\widetilde{\eta}\in\mathcal{M}_p^e$ be two hyperbolic ergodic measures. By Statement 1 of Lemma \ref{l.1}, there are sequences of hyperbolic periodic points $q_k$ and $\widetilde{q}_k$ in $\mathcal{H}(p)$ with $s(q_k)=s(\widetilde{q}_k)=s(p)$ such that for the corresponding sequences of hyperbolic periodic measures $\{\mu_{q_k}\}_{k\in\mathbb{N}}$ and
$\{\mu_{\widetilde{q}_k}\}_{k\in\mathbb{N}}$ we have $\mu_{q_k}\to\eta$ and
$\mu_{\widetilde{q}_k}\to\widetilde{\eta}$.
By Lemma \ref{l.0}, there is a path 
$\{\nu_t\}_{t\in\left[\frac{1}{3},\frac{2}{3}\right]}$ in $\mathcal{M}_p^e$ that connects $\mu_{q_1}$ and $\mu_{\widetilde{q}_1}$, that is, 
$\nu_{\frac{1}{3}}=\mu_{q_1}$ and $\nu_{\frac{2}{3}}=\mu_{\widetilde{q}_1}$. By Statement 2 of Lemma \ref{l.1}, for any $k\in\mathbb{N}$ there are paths
$$
\{\nu_t\}_{t\in \left[\frac{1}{3^{k+1}},\frac{1}{3^k}\right]} \text{ and }
\{\nu_t\}_{t\in\left[1-\frac{1}{3^{k}},1-\frac{1}{3^{k+1}}\right]}
$$
in $\mathcal{M}_p^e$ that connect measures $\mu_{q_{k}}$, $\mu_{q_{k+1}}$ and measures $\mu_{\widetilde{q}_{k}}$, $\mu_{\widetilde{q}_{k+1}}$, respectively and the length of each such path does not exceed $\frac{\varepsilon}{2^k}$. Applying again Lemma \ref{l.1}, we conclude that the path $\{\nu_t\}_{t\in [0,1]}$ given by the above choices and such that 
$\nu_0=\eta$ and $\nu_1=\widetilde{\eta}$ is continuous. The desired result now follows.
\end{proof}

\begin{proof}[Proof of Theorem \ref{t.2}]
Arguments similar to those used in the proof of Lemma \ref{l.1} (see also the proof of Theorem B in \cite{S77}) show that the following statement holds:
\begin{lemma}\label{l.new}
For any $\varepsilon>0$ there exists $\delta>0$ such that for any two measures $\mu_1, \mu_2\in \mathcal{M}_p^e$ with $d_{\mathcal{M}}(\mu_1, \mu_2)<\delta$ there exists a continuous path in $\mathcal{M}_p^e$ connecting $\mu_1$ and $\mu_2$ of diameter smaller than $\varepsilon$.
\end{lemma}
Now Theorem \ref{t.2} can be obtained using Lemma \ref{l.new} in the same way Theorem \ref{t.main} was obtained using Lemma \ref{l.1}.
\end{proof}

\begin{proof}[Proof of Theorem \ref{t.entropydense}]
Given a (not necessarily ergodic) measure $\mu\in \mathcal{M}_p$, by the ergodic decomposition, there exists a measure $\nu$ on the space $\mathcal{M}_p^e$ such that
$$
\mu=\int \tau\,d\nu(\tau) \ \ \text{and} \ \ h_\mu=\int h_{\tau} d\nu(\tau).
$$
It follows that for any $\varepsilon>0$ there are measures
$\tau_1,\ldots, \tau_N\in\mathcal{M}_p^e$ and positive coefficients $\alpha_1,\ldots,\alpha_N$ such that
\begin{equation}\label{f.1}
d_{\mathcal{M}_p}\left(\mu,\sum_{k=1}^N\alpha_k \tau_k\right)<\varepsilon \ \ \text{and}\ \ \left|h_\mu - \sum_{k=1}^N\alpha_kh_{\tau_k}\right|<\varepsilon.
\end{equation}
Given a hyperbolic ergodic measure $\tau$ with $h_{\tau}>0$, there exist a sequence of hyperbolic horseshoes $\Lambda_n$ and a sequence of ergodic measures $\{\nu_n\}_{n\in\mathbb{N}}$ supported on $\Lambda_n$ such that
$\nu_n\to \tau$ and $h_{\nu_n}\to h_{\tau}$  as $n\to\infty$, see, for example,  Corollary 15.6.2 in \cite{BP}. By (H2), one can ensure in the construction of these horseshoes that $\Lambda_n\subseteq\mathcal{H}(p)$ and that $\nu_n$ are the measures of maximal entropy and hence, Markov measures. Further, for every $x\in\Lambda_n$ the dimension of the stable manifold through $x$ is equal to the index of $p$.

In the case when $h_{\tau}=0$ the measure $\tau$ can be approximated by a hyperbolic periodic measure supported on an orbit of a hyperbolic periodic point $q\in \mathcal{H}(p)$ (see Lemma~\ref{l.1}) with $s(q)=s(p)$. There exists a hyperbolic horseshoe $\Lambda_q\subset \mathcal{H}(p)$ that contains a periodic point $q$, and one can choose a Markov measure (which is not a measure of maximal entropy in this case) supported on $\Lambda_q$ that is arbitrary close to the atomic invariant measure supported on the orbit of $q$, and has arbirary small entropy (notice that the support of this Markov measure does not have to be close to the orbit of $q$).

It follows from what was said above that to each hyperbolic ergodic measure 
$\tau_k$ we can associate a hyperbolic horseshoe
$\Lambda_k\subseteq \mathcal{H}(p)$ and a Markov measure $\nu_k$ supported on $\Lambda_k$ such that for every $k=1, \ldots, N$ we have
\begin{equation}\label{f.2}
d_{\mathcal{M}_p}\left(\tau_k, \nu_k\right)<\frac{\varepsilon}{N} \ \ \text{and}\ \ \left|h_{\tau_k} - h_{\nu_k}\right|<\frac{\varepsilon}{N}.
\end{equation}
Notice that all horseshoes $\Lambda_k$ have the same index $s(p)$ and that they are homoclinically related (this means that every periodic orbit in one of the horseshoes is homoclinically related to any periodic orbit on the other horseshoe). This implies that there exists a hyperbolic horseshoe 
$\Lambda\subset\mathcal{H}(p)$ that contains all $\Lambda_k$.

The Markov measure $\nu_k$ is constructed with respect to a Markov partition of $\Lambda_k$ that we denote by $\xi_k$. There exists a Markov partition
$\xi$ of $\Lambda$ such that  its restriction on each $\Lambda_k$ is a refinement of $\xi_k$. The measure $\sum_{k=1}^N\alpha_k \nu_k$ is a Markov measure on $\Lambda$ with respect to the partition $\xi$. Notice that Markov measures as well as their entropies depend continuously on their stochastic matrices. Therefore, given an arbitrarily (not necessarily ergodic) Markov measure, one can produce its small perturbation which is an ergodic Markov measure whose entropy is close to the entropy of the unperturbed one. This gives the required approximation of the measure
$\sum_{k=1}^N\alpha_k \nu_k$, which by \eqref{f.1} and \eqref{f.2} is close to the initial measure $\mu$ and whose entropy is close to $h_\mu$.
\end{proof}

\end{document}